% last update by Felix, April 13, 2013
% last updated by Sebi, April 11, 2013
% last updated by Sebi, February 1, 2014
%last updated by Felix, February 1, 2014

\documentclass[12pt]{article}
\usepackage{amsmath,amssymb,amsthm,amsfonts,amsbsy, enumerate,graphicx,graphics,epstopdf}
\usepackage[dvips]{epsfig}

\setlength{\textwidth}{6.0in}
\setlength{\textheight}{8.5in}
\setlength{\topmargin}{0pt}
\setlength{\headsep}{0pt}
\setlength{\headheight}{0pt}
\setlength{\oddsidemargin}{0pt}
\setlength{\evensidemargin}{0pt}

\theoremstyle{plain}
\newtheorem{theorem}{Theorem}[section]
\newtheorem{corollary}[theorem]{Corollary}

\newtheorem{lemma}[theorem]{Lemma}
\newtheorem{proposition}[theorem]{Proposition}

\newcommand{\ex}{{\rm ex}}
\newcommand{\F}{\mathbb{F}}

\begin{document}

\title{On the Spectrum of Wenger Graphs}
\author{Sebastian M. Cioab\u{a}, Felix Lazebnik  and Weiqiang Li \footnote{Department of Mathematical Sciences,  University of Delaware, Newark, DE 19707-2553, USA.}  }
\date{\today}
\maketitle

\begin{abstract}
Let $q=p^e$, where $p$ is a prime and $e\geq 1$ is an integer. For $m\geq 1$, let $P$ and $L$ be two copies of the $(m+1)$-dimensional vector spaces over the finite field $\mathbb{F}_q$.  Consider the bipartite graph $W_m(q)$ with partite sets $P$ and $L$ defined as follows:  a point $(p)=(p_1,p_2,\ldots,p_{m+1})\in P$ is adjacent to a line $[l]=[l_1,l_2,\ldots,l_{m+1}]\in L$ if and only if the following $m$ equalities hold: $l_{i+1} + p_{i+1}=l_{i}p_1$ for $i=1,\ldots, m$.  We call the graphs $W_m(q)$  Wenger graphs. In this paper, we determine all distinct eigenvalues of the adjacency matrix of $W_m(q)$ and their multiplicities. We also survey results on Wenger graphs.
\end{abstract}

\section{Introduction}

All graph theory notions can be found in Bollob\'as \cite{Bol}. Let $q=p^e$, where $p$ is a prime and $e\geq 1$ is an integer. For $m\geq 1$, let $P$ and $L$ be two copies of the $(m+1)$-dimensional vector spaces over the finite field $\mathbb{F}_q$. We call the elements of $P$ \emph{points} and the elements of $L$ \emph{lines}. If $a\in \F_q^{m+1}$, then we write $(a)\in P$ and $[a]\in L$. Consider the bipartite graph $W_m(q)$ with partite sets $P$ and $L$ defined as follows:  a point $(p)=(p_1,p_2,\ldots,p_{m+1})\in P$ is adjacent to a line $[l]=[l_1,l_2,\ldots,l_{m+1}]\in L$ if and only if the following $m$ equalities hold:
\begin{align*}
l_2+p_2&=l_1p_1\\
l_3+p_3&=l_2p_1\\
&\vdots\\
l_{m+1}+p_{m+1}&=l_m p_1.
\end{align*}
The graph $W_m(q)$ has $2q^{m+1}$ vertices, is $q$-regular and has $q^{m+2}$ edges.

In \cite{Wenger}, Wenger  introduced a family of $p$-regular bipartite graphs $H_k(p)$ as follows. For every  $k\ge 2$, and every prime $p$, the partite sets of $H_k(p)$ are two copies of integer sequences $\{0,1,\ldots, p-1\}^k$,  with vertices  $a=(a_0,a_1,\ldots,a_{k-1})$ and $b=(b_0,b_1,\ldots,b_{k-1})$  forming an edge if
$$b_j \equiv a_j + a_{j+1} b_{k-1} \pmod{p}\;\; \text{for all}\;\; j=0, \ldots, k-2.$$
The introduction and study of these graphs were motivated by an extremal graph theory problem of determining the largest number of edges in a graph of order $n$ containing no cycle of length $2k$. This parameter also known as the Tur\'{a}n number of the cycle $C_{2k}$, is denoted by $\ex(n,C_{2k})$. Bondy and Simonovits~\cite{BonSim} showed that $\ex(n,C_{2k}) = O(n^{1 + 1/k})$, $n\to \infty$. Lower bounds of magnitude $n^{1 + 1/k}$ were known (and still are) for  $k=2,3,5$ only, and the graphs $H_k(p)$, $k=2,3,5$, provided new and simpler examples of such magnitude extremal graphs. For many results on $\ex(n,C_{2k})$, see Verstra\"{e}te \cite{Vers}, Pikhurko \cite{Pikhurko} and references therein.

In \cite{LazUst}, Lazebnik and Ustimenko, using a construction based on a certain Lie algebra, arrived at a family of bipartite graphs $H'_n(q)$,  $n\ge 3$, $q$ is a prime power, whose partite sets were two copies of $\F_q^{n-1}$, with vertices $(p)=(p_2, p_3, \ldots, p_{n})$ and $[l]=[l_1,l_3,\ldots,l_{n}]$  forming an edge if
 $$l_k - p_k = l_1p_{k-1}\;\; \text{for all}\;\; k=3, \ldots, n.$$
 It is easy to see that for all $k\ge 2$ and prime $p$,  graphs $H_k(p)$ and $H'_{k+1}(p)$ are isomorphic, and the map
  \begin{align*}
\phi:  (a_0,a_1,\ldots,a_{k-1}) &\mapsto (a_{k-1}, a_{k-2}, \ldots, a_0),\\
       (b_0,b_1,\ldots,b_{k-1}) &\mapsto [b_{k-1}, b_{k-2}, \ldots, b_0],
\end{align*}
provides an isomorphism from $H_k(p)$ to $H'_{k+1}(p)$.
    Hence,  graphs $H'_n(q)$ can be viewed as generalizations of graphs $H_k(p)$.  It is also easy to show that graphs $H'_{m+2}(q)$ and $W_m(q)$ are isomorphic: the function
      \begin{align*}
\psi:  (p_2, p_3, \ldots, p_{m+2}) &\mapsto [p_2, p_3, \ldots, p_{m+2}],\\
       [l_1, l_3, \ldots, l_{m+2}] &\mapsto (-l_1, -l_3, \ldots, -l_{m+1}),
\end{align*}
mapping points to lines and lines to points, is an isomorphism of $H'_{m+2}(q)$ to $W_m(q)$. Combining this isomorphism with the results in \cite{LazUst},  we obtain that the graph $W_1(q)$ is isomorphic to an induced subgraph of the point-line incidence graph of the projective plane $PG(2,q)$,  the graph $W_2(q)$ is isomorphic to an induced subgraph of the point-line incidence graph of the generalized quadrangle $Q(4,q)$, and $W_3(q)$ is a homomorphic image of an induced subgraph of the point-line incidence graph of the generalized hexagon $H(q)$.

We call the graphs $W_m(q)$  {\it Wenger graphs}. The representation of Wenger graphs as $W_m(q)$ graphs first appeared in Lazebnik and Viglione~\cite{LazVig}. These authors suggested another useful representation of these graphs,  where the right-hand sides of
equations are represented as monomials of $p_1$ and $l_1$ only, see  \cite{VigThesis}. For this, define a bipartite graph $W'_m(q)$ with the same partite sets as $W_m(q)$,  where $(p)=(p_1, p_2, \ldots, p_{m+1})$ and $[l]=[l_1,l_2,\ldots,l_{m+1}]$ are adjacent if
\begin{equation} \label{wenger11} l_k + p_k = l_{1}p_1^{k-1}\;\; \text{for all}\;\; k=2, \ldots, m+1.
\end{equation}
The map
  \begin{align*}
\omega:  (p) &\mapsto (p_1, p_2, p_3', \ldots, p_{m+1}'),\;\text{where}\; p_k'=  p_k+ \sum_{i=2}^{k-1} p_{i}p_1^{k-i},\;k=3,\ldots, m+1,\\
       [l] &\mapsto [l_1, l_2, \ldots, l_{m+1}],
\end{align*}
defines an isomorphism from $W_m(q)$  and $W'_m(q)$.

It was shown in \cite{LazUst} that the automorphism group of $W_m(q)$ acts transitively on each of $P$ and $L$,  and on the set of edges of $W_m(q)$.   In other words,  the graphs $W_m(q)$ are point-, line-, and edge-transitive.  A more detailed study,  see \cite{LazVig},  also showed that $W_1(q)$ is vertex-transitive for all $q$, and that $W_2(q)$ is vertex-transitive for even $q$. For all $m\ge 3$ and $q\ge 3$,  and for $m=2$ and all odd $q$,  the graphs $W_m(q)$ are not vertex-transitive. Another result of \cite{LazVig} is that $W_m(q)$ is connected when $1\le m\le q-1$,  and disconnected when $m\ge q$, in which case it has $q^{m-q+1}$ components, each isomorphic to $W_{q-1}(q)$. In \cite{Viglione}, Viglione  proved that when $1\leq m\leq q-1$, the diameter of $W_m(q)$ is $2m+2$. We wish to note that the statement about the number of components of $W_{m}(q)$ becomes apparent from the representation (\ref{wenger11}).   Indeed, as $l_1p_1^i = l_1p_1^{i+q-1}$,  all points and lines in a component have the property that
 their coordinates $i$ and $j$, where $i \equiv j \mod(q-1)$, are equal.  Hence, points $(p)$,  having $p_1=\ldots = p_{q} = 0$,  and at least one distinct coordinate $p_i$, $q+1\le i\le m+1$, belong to different components. This shows that the number of components is at least $q^{m-q+1}$. As $W_{q-1}(q)$ is connected and $W_m(q)$ is edge-transitive, all components are isomorphic to $W_{q-1}(q)$. Hence, there are exactly $q^{m-q+1}$ of them. A result of Watkins \cite{Wat70}, and the edge-transitivity of $W_m(q)$ imply that the vertex connectivity (and consequently the edge connectivity) of $W_m(q)$ equals the degree of regularity $q$, for any $1\leq m\leq q-1$.

Shao, He and Shan \cite{SHS} proved that in  $W_m(q)$, $q=p^e$, $p$ prime,  for  $m\geq 2$, for any integer $l\neq 5, 4\leq l\leq 2p$ and any vertex $v$, there is a cycle of length $2l$ passing through the vertex $v$. We wish to remark that the edge-transitivity of $W_m(q)$ implies the existence of a $2l$ cycle through any edge, a stronger statement.  Li and Lih \cite{LiLih} used the Wenger graphs to determine the asymptotic behavior of the Ramsey number $r_n(C_{2k})=\Theta(n^{k/(k-1)})$ when $k\in \{2,3,5\}$ and $n\rightarrow \infty$; the Ramsey number $r_n(G)$ equals the minimum integer $N$ such that in any edge-coloring of the complete graph $K_N$ with $n$ colors, there is a monochromatic $G$.
 Representation (\ref{wenger11}) points to a relation of Wenger graphs with the moment curve $t\mapsto (1,t,t^2,t^3,..., t^m)$, and, hence,  with the Vandermonde's determinant, which was explicitly used in \cite{Wenger}. This is also in the background of some geometric constructions by Mellinger and Mubayi \cite{MelMub} of magnitude extremal graphs without short even cycles.

In Section 2, we determine the spectrum of the graphs $W_m(q)$, defined as the multiset of the eigenvalues of the adjacency matrix of $W_m(q)$. Futorny and Ustimenko \cite{FU} considered applications of Wenger graphs in cryptography and coding theory,  as well as some generalizations.  They also conjectured that the second largest eigenvalue $\lambda _2$ of the adjacency matrix of Wenger graphs $W_m(q)$ is bounded from above by $2\sqrt{q}$.
%, making the family $W_m(q)$, $m=1,2,\ldots $,  so-called Ramanujan family of graphs (on Ramanujan graphs,  see Murty \cite{Murty}).
The results of this paper confirm the conjecture for $m=1$ and $2$, or $m=3$ and $q\ge 4$,  and refute it in  other cases.  We wish to point out that for $m=1$ and $2$, or $m=3$ and $q\ge 4$,  the upper bound $2\sqrt{q}$ also follows from the known values of $\lambda _2$ for the point-line $(q+1)$-regular incidence graphs of the generalized polygons $PG(2,q)$, $Q(4,q)$ and $H(q)$ and eigenvalue interlacing (see Brouwer, Cohen and Neumaier \cite{BCN}). In \cite{LiLuWa},  Li, Lu and Wang showed that the graphs $W_m(q)$,  $m=1,2$, are Ramanujan, by computing the eigenvalues of another family of graph described by systems of linear equations in \cite{LazUstDkq}, $D(k,q)$, for $k=2,3$. Their result follows from the fact that  $W_1(q)\simeq D(2,q)$,  and $W_2(q)\simeq D(3,q)$. For more on Ramanujan graphs, see Lubotzky, Phillips and Sarnak \cite{LPS},  or  Murty \cite{Murty}.  Our results also imply that for fixed $m$ and large $q$, the Wenger graph $W_m(q)$ are expanders. For more details on expanders  and their applications, see Hoory, Linial and Wigderson \cite{HLW}, and references therein.

\section{Main Results}

\begin{theorem}\label{main1}
For all prime power $q$ and $1\leq m \leq q-1$, the distinct eigenvalues of $W_m(q)$ are
\begin{equation} \label{eigen}
\pm q, \; \pm\sqrt{mq}, \;\pm\sqrt{(m-1)q}, \;\cdots ,  \;\pm\sqrt{2q}, \;\pm\sqrt{q}, \; 0 .
\end{equation}
The multiplicity of the eigenvalue $\pm\sqrt{iq}$ of $W_m(q)$,  $0\leq i \leq m$,  is
\begin{equation}\label{mult}
(q-1){q \choose i}\sum_{d=i}^m\sum_{k=0}^{d-i} (-1)^k{q-i \choose k}q^{d-i-k}.
\end{equation}
\end{theorem}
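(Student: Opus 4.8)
The plan is to compute the spectrum via the eigenvalues of $A A^T$ (equivalently $A^T A$), exploiting the bipartite structure. Since $W_m(q)$ is bipartite with adjacency matrix $\begin{pmatrix} 0 & N \\ N^T & 0\end{pmatrix}$, its nonzero eigenvalues come in pairs $\pm\sigma$, where $\sigma^2$ ranges over the nonzero eigenvalues of $M := N N^T$. Here $N$ is the $q^{m+1}\times q^{m+1}$ point-line incidence matrix, so $M$ is indexed by points, and $M_{(p),(p')}$ counts the number of lines adjacent to both $(p)$ and $(p')$. The key observation is that $\sqrt{iq}$ being claimed as the eigenvalues means $M$ should have eigenvalues exactly $iq$ for $i=0,1,\dots,m$, plus the Perron value $q^2$ corresponding to $\pm q$. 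So the whole problem reduces to diagonalizing $M$.

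**First I would** use the adjacency condition to get an explicit formula for $M_{(p),(p')}$. A line $[l]$ is adjacent to $(p)$ iff $l_{i+1} = l_i p_1 - p_{i+1}$ for all $i$, which means once $l_1$ is chosen freely, all other coordinates of $[l]$ are determined; thus each point has exactly $q$ neighbors (consistent with $q$-regularity). To count common neighbors of $(p)$ and $(p')$, I set up the system requiring a single $[l]$ to satisfy both recurrences. Writing $l_{i+1}=l_ip_1-p_{i+1}=l_ip_1'-p_{i+1}'$, I would analyze when a value of $l_1$ gives a consistent line for both points. The natural approach is to use the representation $W'_m(q)$ from (\ref{wenger11}), where $l_k = l_1 p_1^{k-1} - p_k$; then $[l]$ is a common neighbor iff $l_1 p_1^{k-1}-p_k = l_1 (p_1')^{k-1}-p_k'$ for all $k=2,\dots,m+1$, i.e. $l_1\big(p_1^{k-1}-(p_1')^{k-1}\big)=p_k-p_k'$. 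This is a system of $m$ linear equations in the single unknown $l_1$, so the number of common neighbors is either $0$, $1$, or $q$ (the latter when $p_1=p_1'$ and all constraints vanish).

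**The cleanest route** is then to diagonalize $M$ using the additive characters of $\mathbb{F}_q$, since $M$ has a convolution-like structure inherited from the $\mathbb{F}_q$-module action. I would fix a nontrivial additive character $\psi$ of $\mathbb{F}_q$ and, for each vector $\mathbf{b}=(b_2,\dots,b_{m+1})\in\mathbb{F}_q^m$, build candidate eigenvectors of the form $v_{\mathbf{b}}(p) = \psi\big(\sum_{k=2}^{m+1} b_k p_k\big)$ times a suitable function of $p_1$. Applying $M$ and summing over the $q$ choices of $l_1$, the inner sum collapses to a Gauss-type sum that is $q$ when a certain linear form in $\mathbf{b}$ and the powers $p_1^{k-1}$ vanishes, and $0$ otherwise. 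Tracking which characters survive, one finds the eigenvalue contributed is $q$ times the number of $p_1$-values satisfying the vanishing condition, which will produce exactly the values $iq$ as $i$ counts the dimension of an associated solution space. The multiplicity count in (\ref{mult}) should then emerge by counting, for each $i$, how many character vectors $\mathbf{b}$ yield a rank-deficiency of exactly $m-i$ in the relevant Vandermonde-type matrix $\big(p_1^{k-1}-(p_1')^{k-1}\big)$ — this is where the moment curve and Vandermonde structure mentioned in the introduction enter.

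**I expect the main obstacle** to be the combinatorial bookkeeping in the multiplicity formula (\ref{mult}): translating "the eigenvalue $iq$ occurs" into a precise count requires determining, for each $i$, the number of characters $\mathbf{b}\in\mathbb{F}_q^m$ for which the associated linear system has solution space of the exact dimension giving eigenvalue $\sqrt{iq}$, and this depends delicately on how many distinct values among $\{p_1^{k-1}\}$ coincide, i.e. on the rank of a Vandermonde matrix over $\mathbb{F}_q$ truncated by the $m\le q-1$ constraint. The double sum $\sum_{d=i}^m\sum_{k=0}^{d-i}(-1)^k\binom{q-i}{k}q^{d-i-k}$ has the flavor of an inclusion-exclusion over which coordinates of $\mathbf{b}$ are "forced," so I would organize the count by first grading the character space by the dimension of the kernel, then applying Möbius/inclusion-exclusion to extract the exact-dimension counts, and finally verifying the total multiplicities sum to $q^{m+1}$ (the full dimension of each partite set) together with the two eigenvalues $\pm q$ of multiplicity one each. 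Checking this summation identity is the essential consistency test that the formula is correct.
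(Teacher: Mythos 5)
Your overall strategy is sound, and it is in effect a dual of the paper's argument. The paper works with $N^TN$ (indexed by lines), observes that the line-collinearity graph is a Cayley graph on the additive group of $\F_q^{m+1}$ with connection set $\{(t,tu,\dots,tu^m): t\in\F_q^*,\ u\in\F_q\}$, and then reads the eigenvalues off directly from additive characters: the character indexed by $(w_1,\dots,w_{m+1})$ gives $i(q-1)-(q-i)=iq-q$, where $i$ is the number of distinct roots of $f(u)=w_1+w_2u+\cdots+w_{m+1}u^m$, and adding $q$ (since $N^TN=B+qI$) gives the squared eigenvalues $iq$. You work instead with $NN^T$ (indexed by points), where the collinearity graph is \emph{not} a Cayley graph --- the set of attainable differences $p-p'$ depends on $p_1$, not only on the difference --- which is exactly why you are forced to use characters only in coordinates $2,\dots,m+1$ and keep ``a suitable function of $p_1$'' free. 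That partial Fourier analysis does work: each subspace $V_{\mathbf{b}}$ is $NN^T$-invariant, and on it $NN^T$ acts as $q$ times the $q\times q$ matrix $P_{\mathbf{b}}$ with $P_{\mathbf{b}}(x,u)=1$ iff $f_{\mathbf{b}}(u)=f_{\mathbf{b}}(x)$, where $f_{\mathbf{b}}(u)=\sum_{k}b_ku^{k-1}$; the eigenvalues of such a fiber-equivalence matrix are the fiber sizes $|f_{\mathbf{b}}^{-1}(c)|$, $c$ in the image, together with zeros. So the first half of the theorem (the list of distinct eigenvalues) would indeed come out of your plan, by a route genuinely different in execution from the paper's.

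The genuine gap is the multiplicity formula (\ref{mult}), and your framing of it points in the wrong direction. After your block-diagonalization, the multiplicity of $iq$ ($i\ge 1$) as an eigenvalue of $NN^T$ equals the number of pairs $(\mathbf{b},c)$ with $\mathbf{b}\neq 0$ such that the polynomial $f_{\mathbf{b}}-c$ has exactly $i$ distinct roots in $\F_q$ --- that is, the number of non-constant polynomials of degree at most $m$ with exactly $i$ distinct roots (with separate, easy bookkeeping for the $\mathbf{b}=0$ block and for values $c$ outside the image, which supply the eigenvalue $0$). This is a question about fibers of a polynomial map, not about ``rank-deficiency of a Vandermonde-type matrix'' or ``the dimension of the kernel'': a single $\mathbf{b}$ contributes several different eigenvalues $q\,|f_{\mathbf{b}}^{-1}(c)|$, one for each value $c$ in the image of $f_{\mathbf{b}}$, so there is no grading of the character space by one dimension parameter, and no inclusion--exclusion organized over ``forced coordinates of $\mathbf{b}$'' will directly produce (\ref{mult}). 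What is actually needed is the enumeration: the number of polynomials of degree at most $m$ over $\F_q$ with exactly $i$ distinct roots equals $(q-1)\binom{q}{i}\sum_{d=i}^m\sum_{k=0}^{d-i}(-1)^k\binom{q-i}{k}q^{d-i-k}$. This is a nontrivial counting result (Zsigmondy, Cohen, Knopfmacher--Knopfmacher), which the paper does not reprove but imports as Lemma~\ref{prop2} with a reference. To complete your proof you would need to (i) recast the multiplicity question as exactly this polynomial-counting problem, and (ii) prove or cite the enumeration formula; as written, your sketch neither identifies the correct combinatorial object nor supplies the count.
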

\begin{proof}
As the graph $W_m(q)$ is bipartite with partitions  $L$ and $P$, we can arrange the rows and the columns of an adjacency matrix $A$ of $W_m(q)$ such that $A$ has the following form:
 \begin{equation}A = \bordermatrix{~ & L& P \cr
                  L & 0 & N^T \cr
                  P & N & 0 \cr}
\end{equation}
which implies that
\begin{equation}A^2 = \begin{pmatrix}
                     N^TN & 0 \\
                     0 & NN^T
                    \end{pmatrix}.
\end{equation}
As the matrices $N^TN$ and $NN^T$ have the same spectrum, we just need to compute the spectrum for one of these matrices. To determine the spectrum of $N^TN$, let $H$ denote the point-graph of $W_m(q)$ on $L$. This means that the vertex set of $H$ is $L$,  and two distinct lines $[l]$ and $[l^\prime]$ of $W_m(q)$ are adjacent in $H$ if
there exists a point $(p)\in P$, such that $[l]\sim (p)\sim [l^\prime]$ in $W_m(q)$. More precisely, $[l]$ and $[l^\prime]$ are adjacent in $H$, if there exists $p_1\in \mathbb{F}_q$ such that for all $i=1,\ldots, m$,  we have
\begin{align*}
 % &[l_1,\ldots ,l_{m+1}]\sim [l_1^\prime, \ldots , l^\prime_{m+1}]\\
 &l_1\not = l_1^\prime \text{ and } l_{i+1}-l_{i+1}^\prime =p_1(l_i-l_i^\prime) \iff  \\
 &l_1\not = l_1^\prime \text{ and } l_{i+1}-l_{i+1}^\prime =p_1^i(l_1-l_1^\prime).
\end{align*}
This implies that $H$ is actually the Cayley graph of the additive group of the vector space $\mathbb{F}_q^{m+1}$ with a generating set
\begin{equation}\label{generatingS}
S=\{(t,tu,\ldots, tu^m) \mid t\in
\mathbb{F}_q^*,u\in \mathbb{F}_q
\}.
\end{equation}

Let $\omega$ be a complex $p$-th root of unity. For $x\in \mathbb{F}_q$, the trace of $x$ is defined as $tr(x)=\sum_{i=0}^{e-1}x^{p^i}$. The eigenvalues of $H$ are indexed after the $(m+1)$-tuples $(w_1,\dots,w_{m+1})\in \mathbb{F}_q^{m+1}$,  and can be represented in  the following form (see Babai \cite{Babai} and Lovasz \cite{Lovasz} for more details):
\begin{align*}
 \lambda_{(w_1,\ldots,w_{m+1})}&=\sum_{(t,tu,\ldots, tu^m)\in S}\omega^{tr(tw_1)}\cdot \omega^{tr(tuw_2)}\cdot \cdots  \cdot\omega^{tr(tu^mw_{m+1})}\\
                               &=\sum_{t\in \mathbb{F}_q^*,\, u\in \mathbb{F}_q}\omega^{tr(tw_1+tuw_2+ \cdots+tu^mw_{m+1})}\\
                               &=\sum_{t\in \mathbb{F}_q^*,\, u\in \mathbb{F}_q}\omega^{tr(t(f(u)))} \text{   (where $f(u):=w_1+w_2u+\dots+w_{m+1}u^m$)}\\
                               &=\sum_{t\in \mathbb{F}_q^*, \, f(u)=0}\omega^{tr(t(f(u)))}+\sum_{t\in \mathbb{F}_q^*, \,f(u)\not=0}\omega^{tr(t(f(u)))}.
\end{align*}
As $\sum_{t\in \mathbb{F}_q^*}\omega^{tr(tx)}=q-1$ for $x=0$, and $\sum_{t\in \mathbb{F}_q^*}\omega^{tr(t x)}=-1$ for every $x\in \mathbb{F}_q^*$, we obtain that
\medskip
\begin{equation}\label{eigenvalue}
 \lambda_{(w_1,\ldots,w_{m+1})}=|\{u\in \mathbb{F}_q \mid f(u)=0\}| (q-1)-|\{u\in \mathbb{F}_q \mid f(u)\not = 0\}|.
\end{equation}
\smallskip

Let $B$ be the adjacency matrix of $H$. Then $N^TN=B+qI$; this fact can be seen easily by examining the on- and off-diagonal entries of both sides of the equation. Therefore, the eigenvalues of $W_m(q)$ can be written in the form $$\pm \sqrt{\lambda_{(w_1,\ldots,w_{m+1})}+q},$$
where $(w_1,\ldots,w_{m+1})\in \mathbb{F}_q^{m+1}$.  Let $f(X)= w_1 + w_2X + \cdots + w_{m+1}X^{m}\in \F _q[X]$.   We consider two cases.
\begin{enumerate}
\item $f=0$.  In this case, $|\{u\in \mathbb{F}_q \mid f(u)=0\}|=q$, and $\lambda_{(w_1,\ldots,w_{m+1})}=q(q-1)$. Thus, $W_m(q)$ has $\pm q$ as its eigenvalues.
\item $f\not = 0$. In this case, let $i=|\{u\in \mathbb{F}_q \mid f(u)=0\}|\leq m$ as $1\leq m\leq q-1$. This shows that $\lambda_{(w_1,\dots,w_{m+1})}=i(q-1)-(q-i)=iq-q$ and implies that $\pm \sqrt{\lambda_{(w_1,\dots,w_{m+1})}+q}=\pm \sqrt{iq}$ are eigenvalues of $W_m(q)$. Note that for any $0\leq i\leq m$, there exists a polynomial $f$ over $\F_q$ of degree at most $m\leq q-1$, which has exactly $i$ distinct roots in $\F _q$.  For such $f$,  $|\{u\in \mathbb{F}_q \mid f(u)=0\}|=i$, and,
    hence, there exists $(w_1,\ldots,w_{m+1})\in \mathbb{F}_q^{m+1}$, such that $\lambda_{(w_1,\ldots,w_{m+1})}=iq-q$. Thus, $W_m(q)$ has $\pm \sqrt{iq}$ as its eigenvalues, for any $0\leq i\leq m$,  and the first statement of the theorem is proven.
\end{enumerate}

The arguments above  imply that the multiplicity of the eigenvalue $\pm\sqrt{iq}$ of $W_m(q)$ equals the number of polynomials of
degree at most $m$ (not necessarily monic) having exactly $i$ distinct roots in $\F _q$.
To calculate these multiplicities, we need the following lemma.  Particular cases of the lemma  were considered in  Zsigmondy \cite{Zsig}, and in Cohen \cite{Cohen73}.  The complete result appears in A.~Knopfmacher and J.~Knopfmacher \cite{KoKo}.

\begin{lemma}[\cite{KoKo}] \label{prop2}
Let $q$ be a prime power, and let $d$ and $i$ be integers such that $0\leq i\leq d\le q-1$.
Then the number $b(q,d,i)$ of monic polynomials in $\F_q[X]$ of degree $d$, having exactly $i$ distinct roots in $\F _q$ is given by
\begin{equation}
b(q,d,i) ={q \choose i}\sum_{k=0}^{d-i} (-1)^k{q-i \choose k}q^{d-i-k}.
\end{equation}
\end{lemma}

By Lemma \ref{prop2}, the number  of polynomials of
degree at most $m$ in $\F_q[X]$ (not necessarily monic) having exactly $i$ distinct roots in $\F _q$ is
\begin{equation}
\sum_{ d=i}^m(q-1)\,b(q,d,i) =(q-1){q \choose i}\sum_{d=i}^m\sum_{k=0}^{d-i} (-1)^k{q-i \choose k}q^{d-i-k}.
\end{equation}
This concludes the proof the theorem.
\end{proof}

The previous result shows that $W_m(q)$ is connected and has $2m+3$ distinct eigenvalues, for any $1\leq m\leq q-1$. As the diameter of a graph is strictly less than the number of distinct eigenvalues (see \cite[Section 4.1]{BCN} for example), this implies that the diameter of Wenger graph is less or equal to $2m+2$. This is actually the exact value of the diameter of the Wenger graph as shown by Viglione \cite{Viglione}.

Since the sum of multiplicities of all eigenvalues of the graph $W_m(q)$ is equal to its order,  and remembering
that the multiplicity of $\pm q$ is one when $1\le m\le q-1$,  we have a combinatorial proof of the following identity.

\begin{corollary}  For every prime power $q$,  and every $m$, $1\le m \le q-1$,
\begin{equation}\label{identity}
\sum_{i=0}^m {q \choose i}\sum_{d=i}^m\sum_{k=0}^{d-i} (-1)^k{q-i \choose k}q^{d-i-k}= \frac{q^{m+1}-1}{q-1}.
\end{equation}
\end{corollary}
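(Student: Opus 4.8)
The plan is to extract the identity by a counting argument, using the multiplicity formula (\ref{mult}) from Theorem \ref{main1} together with the elementary fact that the sum of the multiplicities of all eigenvalues of a graph equals its number of vertices. Since $W_m(q)$ has order $2q^{m+1}$, and since by Theorem \ref{main1} its distinct eigenvalues are $\pm q$, each of multiplicity one, together with $\pm\sqrt{iq}$ for $0\le i\le m$, I would first record the total vertex count as
\[
2q^{m+1} = 2 + \sum_{i=0}^{m} \bigl(\operatorname{mult}(\sqrt{iq}) + \operatorname{mult}(-\sqrt{iq})\bigr),
\]
where I write $M_i$ for the expression in (\ref{mult}). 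The only term requiring care is $i=0$, where $\sqrt{0q}=-\sqrt{0q}=0$ is a single eigenvalue rather than a $\pm$ pair. I would resolve this by returning to the block structure $A^2=\operatorname{diag}(N^TN,\,NN^T)$ from the proof of Theorem \ref{main1}: the value $iq$ occurs as an eigenvalue of $N^TN$ with multiplicity $M_i$, hence with multiplicity $2M_i$ in $A^2$; for $i\ge 1$ this splits evenly between $\pm\sqrt{iq}$, while for $i=0$ it contributes the full $2M_0$ to the single eigenvalue $0$ of $A$. In either case the contribution to the vertex total is exactly $2M_i$, so the displayed identity holds uniformly in $i$.

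Cancelling the factor $2$ and isolating the sum then gives $\sum_{i=0}^m M_i = q^{m+1}-1$. Substituting the closed form $M_i=(q-1)\binom{q}{i}\sum_{d=i}^m\sum_{k=0}^{d-i}(-1)^k\binom{q-i}{k}q^{d-i-k}$ from (\ref{mult}) and dividing both sides by the common factor $q-1$ yields precisely the claimed identity (\ref{identity}).

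The step I expect to treat most carefully — the main obstacle, such as it is — is exactly this bookkeeping for the eigenvalue $0$: a careless pairing of $\pm\sqrt{iq}$ across $i=0$ would mis-count the nullspace of $A$, and the block-matrix observation above is what makes the count clean. Everything else is formal substitution. I would also note that the identity admits a transparent purely combinatorial reading that bypasses eigenvalues entirely: by the argument in the proof of Theorem \ref{main1}, $M_i$ is the number of nonzero polynomials in $\F_q[X]$ of degree at most $m$ having exactly $i$ distinct roots in $\F_q$ (this is where Lemma \ref{prop2} enters, summed over degrees $d$ from $i$ to $m$ and weighted by the $q-1$ choices of leading coefficient). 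Since $m\le q-1$, every nonzero polynomial of degree at most $m$ has between $0$ and $m$ distinct roots, so summing $M_i$ over $0\le i\le m$ enumerates all $q^{m+1}-1$ nonzero polynomials of degree at most $m$. This gives $\sum_{i=0}^m M_i = q^{m+1}-1$ directly, and dividing by $q-1$ recovers (\ref{identity}).
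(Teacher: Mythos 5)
Your proof is correct, and its main line is essentially the paper's own argument: the paper derives (\ref{identity}) in one sentence by equating the sum of eigenvalue multiplicities of $W_m(q)$ with its order $2q^{m+1}$, using that $\pm q$ each have multiplicity one. What you add beyond the paper is worth noting. First, your bookkeeping for the eigenvalue $0$ addresses a point the paper glosses over: read literally, Theorem \ref{main1} assigns multiplicity $M_0$ to the eigenvalue ``$\pm\sqrt{0\cdot q}$,'' whereas the nullspace of $A$ actually has dimension $2M_0$ (the kernels of $N^TN$ and $NN^T$ both contribute), and the identity only comes out right with the factor $2$ applied uniformly to all $0\le i\le m$ --- your block-matrix argument establishes exactly this, e.g.\ for $q=2$, $m=1$ one needs $2+2M_1+2M_0=8$, not $2+2M_1+M_0$. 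Second, your closing observation is a genuinely different and more elementary proof: since $M_i$ counts the nonzero polynomials of degree at most $m$ with exactly $i$ distinct roots in $\F_q$, summing over $i$ enumerates all $q^{m+1}-1$ nonzero polynomials of degree at most $m$ directly, with no spectral input at all. That argument is cleaner, though it still requires $q$ to be a prime power, so it does not settle the paper's remark that (\ref{identity}) appears to hold for every integer $q\ge 3$.
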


The identity (\ref{identity}) seems to hold for all  integers  $q\ge 3$,  so a direct proof is desirable. Other identities can be obtained by taking the higher moments of the eigenvalues of $W_m(q)$.
\bigskip

As we discussed in the introduction,  for $m\ge q$, the graph $W_m(q)$ has $q^{m-q+1}$ components, each isomorphic to $W_{q-1}(q)$.
This, together with Theorem \ref{main1}, immediately implies the following.
\begin{proposition}\label{main2}
For $ m \geq q$, the distinct eigenvalues of $W_m(q)$ are
$$\pm q, \;\pm\sqrt{(q-1)q},\; \pm\sqrt{(q-2)q},\; \cdots , \; \pm\sqrt{2q}, \;\pm\sqrt{q}, \;0 ,$$
and the multiplicity of the eigenvalue $\pm\sqrt{iq}$, $0\le i\le q-1$,  is
$$(q-1)q^{m+1-q} {q \choose i}\sum_{d=i}^q\sum_{k=0}^{d-i} (-1)^k{q-i \choose k}q^{d-i-k}.$$
\end{proposition}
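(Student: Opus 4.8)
The plan is to obtain this as a direct corollary of Theorem~\ref{main1} together with the component decomposition already recorded in the introduction. For $m\ge q$ the graph $W_m(q)$ is disconnected and, as noted there, splits into exactly $q^{m-q+1}$ connected components, each isomorphic to $W_{q-1}(q)$. The first step is to recall the elementary spectral fact that the adjacency matrix of a disjoint union of graphs is, after a simultaneous permutation of its rows and columns, block diagonal with one diagonal block per component. Here all blocks are equal to a common adjacency matrix of $W_{q-1}(q)$, so the characteristic polynomial of $W_m(q)$ is the $q^{m-q+1}$-th power of the characteristic polynomial of $W_{q-1}(q)$.

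From this factorization two things follow at once. First, the set of distinct eigenvalues of $W_m(q)$ equals the set of distinct eigenvalues of $W_{q-1}(q)$: taking disjoint copies neither creates new eigenvalues nor destroys existing ones. Second, the multiplicity of every eigenvalue of $W_m(q)$ is exactly $q^{m-q+1}$ times its multiplicity in $W_{q-1}(q)$. I would then apply Theorem~\ref{main1} with $m$ specialized to $q-1$, which is legitimate because $1\le q-1\le q-1$ for every prime power $q$. The eigenvalue list (\ref{eigen}) at $m=q-1$ is precisely $\pm q,\ \pm\sqrt{(q-1)q},\ \ldots,\ \pm\sqrt{q},\ 0$, giving the claimed distinct eigenvalues. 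For the multiplicity of $\pm\sqrt{iq}$, I would take the value of (\ref{mult}) at $m=q-1$, namely the count (weighted by $q-1$) of polynomials over $\F_q$ of degree at most $q-1$ having exactly $i$ distinct roots in $\F_q$, and multiply it by the number of components $q^{m-q+1}=q^{m+1-q}$, recovering the stated formula.

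There is no genuine analytic difficulty in this argument; the proposition is a bookkeeping consequence of facts already in hand. The only points that warrant care are the justification that a disjoint union of $q^{m-q+1}$ identical components multiplies each eigenvalue multiplicity by exactly that factor while leaving the distinct-eigenvalue set unchanged, and the verification that substituting $m\mapsto q-1$ into (\ref{mult}) and inserting the factor $q^{m+1-q}$ reproduces the expression in the statement. Both amount to routine checks, so I expect the proof to be short.
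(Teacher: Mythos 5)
Your strategy is exactly the paper's: the authors dispose of this proposition in a single sentence, citing the decomposition into $q^{m-q+1}$ components, each isomorphic to $W_{q-1}(q)$, together with Theorem \ref{main1} applied at $m=q-1$ --- precisely your block-diagonal bookkeeping.

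There is, however, one real problem: the ``routine check'' you defer at the end does not actually close against the statement as printed. Substituting $m=q-1$ into (\ref{mult}) and multiplying by the number of components $q^{m+1-q}$ yields
\begin{equation*}
(q-1)\,q^{m+1-q}{q \choose i}\sum_{d=i}^{q-1}\sum_{k=0}^{d-i} (-1)^k{q-i \choose k}q^{d-i-k},
\end{equation*}
in which the outer sum stops at $d=q-1$, whereas the proposition as printed runs it up to $d=q$. The discrepancy is not cosmetic: the extra $d=q$ term equals
\begin{equation*}
{q \choose i}\sum_{k=0}^{q-i} (-1)^k{q-i \choose k}q^{q-i-k} = {q \choose i}(q-1)^{q-i} > 0
\end{equation*}
by the binomial theorem, so the two expressions genuinely differ. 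The version your argument produces (upper limit $q-1$) is the correct one: using the identity (\ref{identity}) at $m=q-1$, the multiplicities then sum to
\begin{equation*}
2q^{m+1-q} + 2(q-1)q^{m+1-q}\cdot\frac{q^q-1}{q-1} = 2q^{m+1},
\end{equation*}
the order of $W_m(q)$ (the first term accounts for $\pm q$, each of multiplicity $q^{m+1-q}$, one per component), whereas the printed formula oversums. In other words, the upper limit $q$ in the proposition is a typo in the paper. Your proof idea is sound and identical to the paper's, but your write-up asserts, without performing it, that the final substitution ``reproduces the expression in the statement''; had you carried the check out, you would have discovered that it does not, and that your computation in fact corrects the stated formula. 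When an allegedly routine verification is the only thing standing between your argument and the claim, do it --- here it is the step that catches the error.
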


\section{Open Questions}

There are several open questions about the Wenger graphs $W_m(q)$ that we think are worth investigating: deciding whether these graphs are Hamiltonian, finding the lengths of all their cycles,  determining their automorphism group, or determining the parameters of the linear codes whose Tanner graphs are the Wenger graphs.

\section*{Acknowledgments}
The research of S.M. Cioab\u{a} was partially supported by National Security Agency grant H98230-13-1-0267,  and the research of F. Lazebnik was partially supported by the NSF grant DMS-1106938-002.


\begin{thebibliography}{99}

\bibitem {Babai} L. Babai, Spectra of Cayley graphs, {\em J. Combin. Theory Ser. B} {\bf 27} (1979), 180--189.

\bibitem{Bol}  B. Bollob\'as,  {\em Modern Graph Theory}, Springer-Verlag New York Inc., 1998.

\bibitem {BonSim} J. A. Bondy and M. Simonovits, Cycles of even length in graphs, {\em J. Combin. Theory Ser. B} {\bf 16} (1974), 97--105.

\bibitem{BCN} A. Brouwer, A. Cohen and A. Neumaier, {\em Distance-Regular Graphs},  Ergebnisse der Mathematik und ihrer Grenzgebiete,  Springer-Verlag, 1989.

\bibitem{Cohen73} S. D. Cohen, The values of a polynomial over a finite field, {\em Glasgow Math. J.} 14 (1973), 205--208.


\bibitem {FU} V. Futorny and V. Ustimenko, On small world semiplanes with generalized Schubert cells, {\em Acta Appl. Math.} {\bf 98} (2007), 47--61.

\bibitem {HLW} S. Hoory, N. Linial and A. Wigderson, Expanders and their applications, {\em Bull. Amer. Math. Soc.} {\bf 43} (2006), 439--561.

\bibitem{KoKo} A. Knopfmacher and J. Knopfmacher, Counting polynomials with a given number of zeros in a finite field, {\em Linear and Multilinear Algebra} {\bf 26} (1990), 287--292.


\bibitem {LazUst} F. Lazebnik and V. Ustimenko, New examples of graphs without small cycles and of large size, {\em European J. Combin.} {\bf 14} (1993), 445--460.

\bibitem{LazUstDkq} F. Lazebnik and V. Ustimenko, Explicit construction of graphs with arbitrary large
girth and of large size, {\em Discrete Appl. Math.} {\bf 60} (1997), 275--284.

\bibitem{LazVig} F. Lazebnik and R. Viglione, An infinite series of regular edge- but not vertex-transitive graphs.
{\em J. Graph Theory}, {\bf 41} (2002), 249--258.

\bibitem {LiLih} Y. Li and K.-W. Lih, Multi-color Ramsey numbers of even cycles, {\em European J. Combin.} {\bf 30} (2009), 114--118.

\bibitem {LiLuWa} W.-C. W. Li,  M. Lu and C. Wang, Recent developments in low-density parity-check codes. Coding and cryptology, 107--123, {\it Lecture Notes in Comput. Sci.}, 5557, Springer, Berlin, 2009.

\bibitem {Lovasz} L. Lov\'{a}sz, Spectra of graphs with transitive groups, {\em Period. Math. Hungar.} {\bf 6} (1975), 191--195.

\bibitem{LPS} A.Lubotzky, R. Phillips and P. Sarnak, Ramanujan graphs, {\em Combinatorica}  {\bf 8}  (3)  (1988), 261–-277.

\bibitem{MelMub} K. Mellinger and D. Mubayi, Constructions of bipartite graphs from finite geometries, {\em J. Graph Theory} {\bf 49} (2005), no. 1, 1--10.

\bibitem {Murty} M.R. Murty, Ramanujan graphs, {\em J. Ramanujan Mat. Soc.} {\bf 23} (2003), 33--52.

\bibitem {Pikhurko} O. Pikhurko, A note on the Tur\'{a}n function of even cycles, {\em Proc. Amer. Math. Soc.} {\bf 140} (2012), 3687-3992.

\bibitem {SHS} J.-Y. Shao, C.-X. He and H.-Y. Shan, The existence of even cycles with specific lengths in Wenger's graph, {\em Acta Math. Appl. Sin. Engl.} {\bf 24} (2008), 281--288.


\bibitem {Vers} J. Verstra\"{e}te, On arithmetic progressions of cycle lengths in graphs, {\em Combin. Probab. Computing} {\bf 9} (2000), 369--373.

\bibitem{VigThesis} R. Viglione,  Properties of some algebraically defined graphs, Ph.D. Thesis, University of
Delaware, 2002.

\bibitem {Viglione} R. Viglione, On the diameter of Wenger graphs, {\em Acta Appl. Math.} {\bf 104} (2008), 173--176.

\bibitem{Wat70} M.E. Watkins, Connectivity of  Transitive Graphs, {\em J. Combin. Theory}, {\bf 8}, 23--29 (1970).

\bibitem {Wenger} R. Wenger, Extremal graphs with no $C^4$'s, $C^6$'s, or $C^{10}$'s, {\em J. Combin. Theory Ser. B} {\bf 52} (1991), no. 1, 113--116.

\bibitem{Zsig} K. Zsigmondy, \"{U}ber die Anzahl derjenigen ganzen ganzzahligen Funktionen $n^{\rm ten}$
Grades von $x$ welche in Bezug auf einen gegebenen  Primzahlmodul eine vorgeschriebene Anzahl von Wurzeln besidzen,
{\em Sitzungsher. Wien} 103 (1894), 135--144, Abt. 11.

\end{thebibliography}
\end{document}